\newtheorem{theorem}{Theorem}[section]
\theoremstyle{definition}
\newtheorem{defn}[theorem]{Definition}
\title{
A Nearly Finitary Matroid that is not $k$-Nearly Finitary}
\author{Patrick Tam}
\begin{document}
\maketitle

\begin{abstract}
The class of $k$-nearly finitary matroids for some natural number $k$ is a subclass of the class of nearly finitary matroids.
A natural question is whether this inclusion is proper.
We answer this question affirmatively by constructing a nearly finitary matroid that is not $k$-nearly finitary for any $k \in \mathbb{N}$.
\end{abstract}

\section{Introduction}

We settle a question raised in~\cite{Intersection}. 
Let us first review some background material starting with an axiom system for infinite matroids from~\cite{Axm}.

Now, we introduce a set of matroid axioms developed in~\cite{Axm} in 2010 that captures essential aspects of finite matroid theory while allowing for infinite matroids to be defined.

Now we let $E$ to be any set and possibly infinite. Note that for shorthand, we define $A+b:= A\cup\{b\}$ and $A-b:= A \setminus \{ b \}$.
\begin{defn}
A $\textit{matroid}$ $M$ is a pair $(E,\mathcal{L})$ with $\mathcal{L} \subset 2^{E}$  satisfying the following properties:
\begin{itemize}
	\item I1: $\emptyset \in \mathcal{L}$.
	\item I2: If $B \in \mathcal{L}$ and $A \subset B$, then $A \in \mathcal{L}$.
	\item I3: If $B$ is a maximal element of $\mathcal{L}$ and $A$  is a non-maximal element of $\mathcal{L}$, then there exists $b \in B \setminus A$ such that $A+ b \in \mathcal{L}$.
	\item I4: If $A \in \mathcal{L}$ and $A \subset X \subset E$, then the set $\{S \in \mathcal{L} : A \subset S \subset X\}$ has a maximal element.
\end{itemize}
\end{defn}
Here, we use set inclusion as our partial ordering when we talk about maximality and minimality. We will use this partial order throughout the rest of the paper. Elements of $\mathcal{L}$ are called $\textit{independent sets}$. Maximal elements of $\mathcal{L}$ are also called $\textit{bases}$. These first two axioms are familiar from finite matroids. The third axiom is different from our third axiom for finite matroids because of the possibility of infinite independent sets. It may be possible to extend a countable independent set by another countable independent set under these axioms. A countable set can be a proper subset of another countable set. Thus, cardinality does not give us enough information to determine whether we can extend an independent set by another one. We thus need axiom I3 which is not reliant on comparing cardinalities. The fourth axiom ensures that every matroid $M$ has a base and any restriction of $M$ to any subset of the ground set of $M$ has a base. Elements of $2^E \setminus \mathcal{L}$ are called $\textit{dependent sets}$. A minimal dependent set is called a $\textit{circuit}$. A circuit with only one element is called a $\textit{loop}$. It is possible to define a matroid $M=(E, \mathcal{L})$ by specifying a suitable set of circuits and taking the independent sets to be subsets of $E$ that contain no circuit. Any pair $M=(E, \mathcal{L})$ that satisfies the first two axioms I1 and I2 is called an independence system.

The rank of a matroid is given by the cardinality of a base if this cardinality is finite. If bases of a matroid are infinitely large, we simply say that the rank of that matroid is infinite. In other words, there is only one infinite rank for matroids.

Under the above axioms, every matroid has a base and a dual. Let $M=(E, \mathcal{L})$ be a matroid. We define 
\begin{equation*}
\mathcal{L}^{*} := \{ S \subset E \colon \exists B \in \mathcal{L}^{\mathrm{max}} \text{ s.t. }  S \subset E \setminus  B \}
\end{equation*}
where $\mathcal{L}^{\mathrm{max}}$ is the set of bases of $M$. 
Then $M^{*} = (E, \mathcal{L}^{*})$ is the $\textit{dual}$ of $M$. The authors of~\cite{Axm} who developed this axiom system showed that this dual is indeed a matroid. In fact, this duality is an involution and $M^{**} =M$.

A circuit of $M^*$ is called a $\textit{cocircuit}$ of $M$. A loop of $M^*$ is also known as a $\textit{coloop}$ of $M$. Equivalently, a coloop is an element of $M$ that is not contained in any circuit of $M$ and thus contained in every base of $M$.

A special class of matroids which are known as the finitary matroids are better understood than general infinite matroids. 
\begin{defn}
A matroid $M$ is $\textit{finitary}$ if a set $S$ is independent in $M$ if and only if all finite subsets of $S$ are independent.
\end{defn}
For every matroid $M = (E,\mathcal{L})$, there exists an associated finitary matroid $M^{\mathrm{fin}}=(E,\mathcal{L}^{\mathrm{fin}})$ whose independent sets are subsets $S$ of $E$ such that every finite subset of $S$ is independent in $M$. The proof of this relies on Zorn's lemma and can be found in~\cite{Axm}. $M^{\mathrm{fin}}$ is also known as the $\textit{finitarization}$ of $M$. 

To understand finitarization, consider the matroid $M= ( \mathbb{N}, \mathcal{L})$ where

\begin{equation*}
\mathcal{L} := \{ S \subset \mathbb{N} \colon |\mathbb{N} \setminus S| \geq 2 \}.
\end{equation*}

Since all finite subsets of $\mathbb{N}$ are independent in $M$, $M^\mathrm{fin} = ( \mathbb{N}, 2^{\mathbb{N}})$. Here, $M^\mathrm{fin}$ has more independent sets than $M$. We can make this precise in the following way.

Every base $B$ of $M$ extends to a base $F$ of $M^\mathrm{fin}$. To see why, note that any base $B$ is independent in $M^\mathrm{fin}$. Because of our fourth independence axiom for matroids, the set $\{ S \in \mathcal{L} (M^\mathrm{fin}) \colon B \subset S \subset E \}$ has a maximal element $F$. This maximal element is a base of $M^\mathrm{fin}$. 

Conversely, every base $F$ of $M^\mathrm{fin}$ contains a base $B$ of $M$. To see why, consider $F^* := E \setminus F$. Then $F^*$ is a base of $M^{\mathrm{fin}*}$ . Independent sets of $M^{\mathrm{fin}*}$ are also independent in $M^*$. So $F^*$ extends to some base $B^*$ in $M^*$ by I4. $B := E \setminus B^*$ is then a base in $M$ and $F$ contains $B$.  Unfortunately, the class of finitary matroids is not closed under duality. The authors of~\cite{Union} define the class of nearly finitary matroids which is still not closed under duality but extends the class of finitary matroids.

\begin{defn}
A matroid $M$ is called $\textit{nearly finitary}$ if whenever a base $F$ in $M^\mathrm{fin}$ contains a base $B$ in $M$, their set difference is a finite set.
\end{defn}

\begin{defn}
For an integer $k$, a matroid is called $k$$\textit{-nearly finitary}$ if whenever a base $F$ in $M^\mathrm{fin}$ contains a base $B$ in $M$, their set difference has cardinality bounded by $k$.
\end{defn}

\section{Counterexample}

A natural question raised in \cite{Intersection} is whether every nearly finitary matroid is $k$-nearly finitary for some $k \in \mathbb{N}$. We now present an example of a nearly finitary matroid that is not $k$-nearly finitary for any $k$ that is originally due to Attila Por (personal communication, November 20, 2018).

Before we construct our example, let us first introduce a finitary matroid that will allow us to construct our counterexample.

Note, our set of natural numbers $\mathbb{N}$ excludes zero and consists of positive integers. Let $E$ be the Cartesian product $\mathbb{N} \times \mathbb{N}$. We imagine $E$ to be the integral points of the first quadrant. We define $A\subset E$ to be in $\mathcal{L}_1$ if for every $n\in \mathbb {N}$, the first $n$ rows of $E$ have at most $n$ elements in A. Then $M_1 = (E,\mathcal{L}_1)$ is a finitary matroid. It is straightforward to see that $M_1$ satisfies I1 and I2. For I3, suppose there is a base $B_1$ of $M_1$ and a non-maximal independent set $A_1$. Since $A_1$ is non maximal, there is some base $A_B$ that properly contains $A_1$. Let  $i\in \mathbb{N}$ be a row which contains an element of $A_B \setminus A_1$. Since $B_1$ is maximal, there is some $j \geq i$ such that $B_1$ has exactly $j$ elements in the first $j$ rows. By construction, $A_1$ has strictly less than $j$ elements in the first $j$ rows. There must be some $d \leq j$ such that $A_1$ has strictly less than $n$ elements in the first $n$ rows for all $n \geq d$. We pick the smallest such $d$. $B_1\setminus A_1$ has at least one element $b$ in rows $d$ to $j$. This element can be used to augment $A_1$ and I3 is satisfied. For I4, suppose $A_1 \subset X$ with $A_1 \in \mathcal{L}_1$. Let $A_1[0] = A_1$. We inductively let $A_1 [i]$ be $A_1 [i-1] \cup S[i]$ where $S[i]$ is a maximal subset of row i of $X$ that keeps $A_1 [i]$ independent. Taking the union of all these $A_1[i]$ sets will give us our desired maximal independent set that shows that I4 is satisfied. Finally, to show that $M_1$ is finitary, suppose $A$ is any subset of $E$ whose finite subsets are all independent. If $A$ were dependent, then there would be some $j$ where $A$ has more than $j$ elements in the first $j$ rows. Picking these elements would give us a finite dependent set which contradicts our assumption that all of $A$'s finite subsets are independent. Thus, we conclude that $A$ is independent and that $M_1$ is finitary.

We now define  $\mathcal{L}\subset \mathcal{L}_1 \subset 2^E$. Suppose $A_1$ is independent in $\mathcal{L}_1$. If $A_1$ is finite, then $A_1$ is in $\mathcal{L}$. If $A_1$ is infinite, then we define column $l$ to be $\textit{dominant}$ in $A_1$ if all but finitely many elements of $A_1$ are in column $l$. If the $l$-th column of $A_1$ is dominant, subsets of the form $A_1 \setminus X$ where $X \subset A_1$ and $|X| \geq l$ are elements of $\mathcal{L}$. If there is no such dominant column $l$, then we declare $A_1$ to be an element of $\mathcal{L}$.

\begin{theorem}
$M=(E,\mathcal L)$ as constructed above is a nearly finitary matroid, but not $n$-nearly finitary for any $n\in \mathbb N$.
\end{theorem}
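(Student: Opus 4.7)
My plan is to reduce the problem to characterizing the bases of $M$ sitting inside each base of $M^{\mathrm{fin}}$, and then read off both claims of the theorem from that characterization. The key preliminary observation is that a finite set belongs to $\mathcal{L}$ iff it belongs to $\mathcal{L}_1$, whence finitariness of $M_1$ gives $M^{\mathrm{fin}} = M_1$. The matroid axioms I1--I4 for $M$ can then be verified by case analysis on the three rules defining $\mathcal{L}$, using the corresponding axioms for $M_1$; nothing essentially new happens beyond the fact that ``remove at least $l$'' is monotone in the removed set.

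The core step is the following characterization. If $F$ is a base of $M_1$ with no dominant column, then $F \in \mathcal{L}$ and, being already maximal in $\mathcal{L}_1 \supseteq \mathcal{L}$, is itself a base of $M$. If $F$ has dominant column $l$, then the bases of $M$ contained in $F$ are exactly the sets $F \setminus Y$ for $Y \subseteq F$ with $|Y| = l$. Membership $F \setminus Y \in \mathcal{L}$ is immediate from the defining rule with $A_1 = F$. The real work is maximality: suppose toward contradiction that $(F \setminus Y) \cup \{e\} \in \mathcal{L}$ for some $e \notin F \setminus Y$, witnessed by $A_1 \in \mathcal{L}_1$ of the form $(F \setminus Y'') \cup Z$ with $Y'' \subseteq Y$ and $Z \cap F = \emptyset$. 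Since $A_1$ still contains infinitely many column-$l$ points, its dominant column is $l$, so the definition of $\mathcal{L}$ forces $|A_1 \setminus ((F \setminus Y) \cup \{e\})| \geq l$. A direct set-theoretic computation gives $|A_1 \setminus ((F \setminus Y) \cup \{e\})| = l - |Y''| + |Z| - 1$. On the other hand, at any ``saturation'' row index $n$ where the first $n$ rows of $F$ contain exactly $n$ elements, the $\mathcal{L}_1$ constraint on $A_1$ forces the number of elements of $Z$ in the first $n$ rows to be at most the number of elements of $Y''$ there; letting $n \to \infty$ along such indices gives $|Z| \leq |Y''|$, hence $|A_1 \setminus ((F \setminus Y) \cup \{e\})| \leq l - 1 < l$, a contradiction.

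From this characterization both halves of the theorem drop out. For \emph{nearly finitary}, $|F \setminus B|$ is either $0$ or equal to the dominant column of $F$, in either case finite. For \emph{not $n$-nearly finitary}, take $F = \{(i, n+1) : i \in \mathbb{N}\}$, the entire $(n+1)$-st column; this is a base of $M_1$ with dominant column $n+1$, and any base $B$ of $M$ contained in $F$ satisfies $|F \setminus B| = n + 1 > n$. The main obstacle in the whole argument is the slot-counting step above, which uses in an essential way that a base of $M_1$ saturates its rank constraint at infinitely many row indices; everything else is either straightforward casework or unpacking of definitions.
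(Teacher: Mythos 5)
Your central lemma --- that inside a base $F$ of $M_1=M^{\mathrm{fin}}$ with dominant column $l$ the bases of $M$ are exactly the sets $F\setminus Y$ with $|Y|=l$, while a base of $M_1$ with no dominant column is itself a base of $M$ --- is correct, and your proof of it is sound: the identity $|A_1\setminus((F\setminus Y)\cup\{e\})|=l-|Y''|+|Z|-1$ checks out, and the slot-counting inequality $|Z|\le|Y''|$ does follow from the fact that a base of $M_1$ meets the first $n$ rows in exactly $n$ elements for infinitely many $n$. This is a genuinely different route from the paper, which after verifying the axioms simply asserts ``it is clear that $M$ is nearly finitary but not $n$-nearly finitary''; your characterization actually proves both of those claims (the difference $|F\setminus B|$ is $0$ or the dominant column index, and the column $\{(i,n+1):i\in\mathbb N\}$ witnesses failure of $n$-near-finitarity), so on this half of the theorem your argument is more complete than the paper's.

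The gap is on the other half: the theorem asserts that $M$ \emph{is a matroid}, and your one-sentence dismissal of I1--I4 (``nothing essentially new happens'') does not cover I3, which is where the paper spends essentially all of its effort. Your base characterization describes the maximal elements of $\mathcal L$, but I3 requires augmenting an arbitrary non-maximal $S\in\mathcal L$ from an arbitrary base $B$ of $M$, and this does not reduce to the corresponding axiom for $M_1$: even when $S+b\in\mathcal L_1$ for some $b\in B\setminus S$, the set $S+b$ may acquire a dominant column $l$ and then must additionally be shown to extend by $l$ further elements inside $\mathcal L_1$, and when $B=F\setminus Y$ the element supplied by I3 for $M_1$ might lie in $Y$ rather than in $B$. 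The paper handles this by a case split on whether the relevant bases share a dominant column, invoking the truncation matroids $M_1[k]$ of the matroid-union paper in the shared-column case and a direct row argument otherwise. A similar (if smaller) issue affects I4, since the witness $A_1$ in the definition of $\mathcal L$ need not lie inside the prescribed set $X$. Your counting technique is strong enough to carry these verifications, but as written they are asserted, not proved, and they are a substantive part of the statement.
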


\begin{proof}
$M$ naturally inherits I1 and I2 from $M_1$. It also clearly satisfies I4 because it is nearly finitary. For I3, let $S$ be a non-maximal independent set and let $B$ be a base. $S$ is properly contained in some base $A$. If $A$ and $B$ both have no dominant column, we can extend $S$ with an element of $B$ because $M_1$ is a matroid. If $A$ and $B$ have the same dominant column $k_1$, it is straightforward to see that we can extend $S$ with an element in $B \setminus S$. We follow~\cite{Union} and define the truncation matroid $M_1 [k]$ as follows. $M_1 [k]$ has the same finite independent sets as $M_1$. An infinite set is independent in $M_1 [k]$ if it can be obtained by deleting $k$ elements of an independent set in $M_1$. This construction was proven to be a matroid in~\cite{Union}. We define $M_S (k)$ to be $M_1[k]$ restricted to $A \cup B$. Now, $M_S (k_1)$ is a matroid with independent sets that are independent in $M$ and thus I3 is satisfied. Let us now assume that at least one of $A$ and $B$ have a dominant column but do not share the same dominant column. This is the last remaining case. $B \setminus S$ has elements above the row $m$ for all $m \in \mathbb{N}$. Since $S$ is properly contained in $A$, there is some $a \in A \setminus S$. Let $r$ be the index of the row where this $a$ is. Any element in $B \setminus S$ above row $r$ can be used to augment $S$ to satisfy I3 and finally prove that $M$ is a matroid. It is clear that $M$ is nearly finitary but not $n$-nearly finitary for any $n \in \mathbb{N}$.
\end{proof}

From this example infinite families can be obtained by standard matroid operations defined in~\cite{Union}. Some of these are compiled in the following statements. We omit the (easy) proofs:

\begin{theorem}
Let $M$ be a matroid that is nearly finitary but not $n$-nearly finitary for any $n\in \mathbb N$. Then, the following matroids have the same property:
\begin{enumerate}
\item The truncation $M[k]$ for any $k\in \mathbb N$, as defined in \cite{Union}.
\item The direct sum $M \oplus N$ for any nearly finitary matroid $N$. In other words, this is the matroid union $M \vee N$ where $M$ and $N$ are defined on disjoint ground sets.
\item The deletion $M-S$ for any finite subset $S$ of the ground set.
\item The contraction $M/T$ for any set $T$ of coloops in $M$.
\end{enumerate}
\end{theorem}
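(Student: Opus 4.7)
The uniform strategy for all four parts is to verify that each operation commutes with finitarization (up to the obvious translation) and that bases of the modified matroid $M'$ and of $(M')^{\mathrm{fin}}$ differ from bases of $M$ and $M^{\mathrm{fin}}$ only by a bounded, operation-dependent number of elements. Once this is in place, both halves of the conclusion transfer from $M$ to $M'$: finiteness of the symmetric difference $|F \setminus B|$ is preserved under a bounded change, and unboundedness of $|F \setminus B|$ over pairs of bases remains unbounded after a constant shift.

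Parts (1) and (4) I expect to be the most transparent. For the truncation $M[k]$, the finite independent sets coincide with those of $M$, whence $(M[k])^{\mathrm{fin}} = M^{\mathrm{fin}}$; any infinite base of $M[k]$ has the form $B \cup X$ for a base $B$ of $M$ and a $k$-element set $X$ disjoint from $B$, so any base $F$ of $M^{\mathrm{fin}}$ containing $B \cup X$ satisfies $|F \setminus (B \cup X)| = |F \setminus B| - k$, and both halves of the conclusion follow. For the contraction $M/T$ by a set of coloops, every base of $M$, and hence also every base of $M^{\mathrm{fin}}$, contains $T$, so bases of $M/T$ and of $(M/T)^{\mathrm{fin}}$ are exactly $B \setminus T$ and $F \setminus T$; the cardinality $|F \setminus B|$ is unchanged and the property transfers verbatim.

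For the direct sum in (2), I would first check $(M \oplus N)^{\mathrm{fin}} = M^{\mathrm{fin}} \oplus N^{\mathrm{fin}}$ (independence of a finite set in a disjoint-ground-set union is coordinatewise), so any base decomposes as $B_M \cup B_N$ or $F_M \cup F_N$ and $|F \setminus B| = |F_M \setminus B_M| + |F_N \setminus B_N|$; the $N$-contribution is finite by hypothesis while the $M$-contribution can be made arbitrarily large. For the deletion $M - S$ in (3), $(M - S)^{\mathrm{fin}} = M^{\mathrm{fin}} - S$, and, using axiom I4, any base $B'$ of $M - S$ extends to a base $B$ of $M$ by adding at most $|S|$ elements of $S$, and similarly for $F'$; the symmetric difference is altered by at most $2|S|$, which is harmless for both halves of the conclusion. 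The main --- and really only --- obstacle is the non-canonicity of this extension in (3), but since $S$ is finite any two choices differ by a bounded amount, so the argument goes through.
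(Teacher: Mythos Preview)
The paper does not actually prove this theorem; immediately before the statement it says ``We omit the (easy) proofs.'' So there is no argument in the paper to compare against, and your sketch must stand on its own.

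Your overall strategy---checking that each operation commutes with finitarization and shifts the gap $|F\setminus B|$ by at most a fixed finite amount---is exactly right, and parts (2), (3), and (4) are handled correctly. In part~(1), however, you have the direction of truncation reversed. Under the definition the paper quotes from \cite{Union}, an infinite set is independent in $M[k]$ precisely when it can be obtained by \emph{deleting} $k$ elements from an $M$-independent set; consequently an infinite base of $M[k]$ has the form $B\setminus X$ for a base $B$ of $M$ and a $k$-subset $X\subseteq B$, not $B\cup X$ as you write. With the correct description (and $B\subseteq F$) the formula becomes $|F\setminus(B\setminus X)|=|F\setminus B|+k$ rather than $|F\setminus B|-k$. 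Fortunately the conclusion is unaffected: a constant additive shift in either direction preserves both finiteness and unboundedness of $|F\setminus B|$, so your argument survives once the sign is fixed.
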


\subsection*{Acknowledgements}

We thank Attila Por for providing the example described in the paper. Coauthorship was offered to him. We also thank the editor and reviewers for useful feedback to improve this paper.


\end{document}